\documentclass[pdflatex,sn-mathphys-num]{sn-jnl}

\usepackage{graphicx}
\usepackage{multirow}
\usepackage{amsmath,amssymb,amsfonts}
\usepackage{amsthm}
\usepackage{array}
\usepackage{mathrsfs}
\usepackage[title]{appendix}
\usepackage{xcolor}
\usepackage{textcomp}
\usepackage{manyfoot}
\usepackage{booktabs}
\usepackage{algorithm}
\usepackage{algorithmicx}
\usepackage{algpseudocode}
\usepackage{listings}
\usepackage{hyperref}
\usepackage[nameinlink,noabbrev]{cleveref}

 \theoremstyle{thmstyleone}
  \newtheorem{theorem}{Theorem}
  \newtheorem{proposition}{Proposition}
  \newtheorem{lemma}{Lemma}
  \newtheorem{corollary}{Corollary}
  \theoremstyle{thmstyletwo}
  \newtheorem{remark}{Remark}
  \theoremstyle{thmstylethree}
  \newtheorem{definition}{Definition}

\crefname{equation}{Eq.}{Eqs.}
\crefname{figure}{Fig.}{Figs.}
\crefname{table}{Table}{Tables}
\crefname{theorem}{Theorem}{Theorems}
\crefname{proposition}{Proposition}{Propositions}
\crefname{lemma}{Lemma}{Lemmas}
\crefname{corollary}{Corollary}{Corollaries}
\crefname{section}{Section}{Sections}
\crefname{appendix}{Appendix}{Appendices}

\DeclareMathOperator*{\argmax}{arg\,max}

\DeclareMathOperator{\supp}{supp}
\DeclareMathOperator{\Cov}{Cov}

\newcommand{\dd}{\,\mathrm{d}}
\newcommand{\R}{\mathbb{R}}
\newcommand{\E}{\mathbb{E}}
\newcommand{\calX}{\mathcal{X}}
\newcommand{\calM}{\mathcal{M}}
\newcommand{\calP}{\mathcal{P}}
\newcommand{\calC}{\mathcal{C}}
\newcommand{\calS}{\mathcal{S}}
\newcommand{\calH}{\mathcal{H}}
\newcommand{\calJ}{\mathcal{J}}
\newcommand{\norm}[1]{\left\lVert #1\right\rVert}
\newcommand{\pos}[1]{\left[#1\right]_+}

\begin{document}

\title[Projective maximum entropy]{Projective Maximum Entropy: Universality and Acceptance-Region Calibration}

\author{\fnm{Hideitsu} \sur{Hino}}\email{hino@ism.ac.jp}

\affil{%
  \orgname{The Institute of Statistical Mathematics},
  \orgaddress{%
    \street{10-3 Midori cho},
    \city{Tachikawa City},
    \postcode{190-8562},
    \state{Tokyo},
    \country{Japan}}}

\abstract{
Maximum-entropy reference distributions are usually constructed on the normalized probability simplex. This formulation is less natural for unnormalized statistical models, in which positive multiples represent the same shape, and it does not directly explain how a prescribed admissible region should determine the deformation parameter of a bounded-support reference distribution. We formulate maximum entropy on the projective space of nonnegative measures and establish three results of statistical relevance. First, a universality theorem shows that every admissible monotone transform of the same normalized power functional has exactly the same optimizer under linear moment constraints. The result unifies the maximum-entropy implications of Tsallis and R\'enyi entropies, H\"older composite scores, pseudo-spherical scores, Bregman--H\"older constructions, and related homogeneous divergences without asserting a new distribution family. Second, the common optimizer is characterized as a $q$-exponential density; under mean and covariance constraints it is a compactly supported $q$-Gaussian for positive deformation and a Student-type density for negative deformation. Third, a prescribed Mahalanobis acceptance region with squared radius $R^2>d+2$ uniquely determines the deformation parameter $\gamma_R=2/(R^2-d-2)$. The resulting affine-equivariant reference density is the unique projective maximum-entropy solution, and its support coincides with the specified ellipsoid without an additional support constraint. This provides a principled method for constructing bounded-support statistical reference distributions from robust location and scatter estimates or from externally specified admissible regions.}

\keywords{maximum entropy; projective statistical model; H\"older score; $q$-exponential distribution; affine equivariance; acceptance region}

\maketitle

\section{Introduction}\label{sec:introduction}

The maximum-entropy principle provides a constructive rule for selecting a probability distribution from partial information and has long been interpreted as a method of statistical inference~\citep{Jaynes1957}. Under prescribed mean and covariance, the Shannon-entropy maximizer is Gaussian. This classical characterization provides a statistical rationale for using the Gaussian law as a reference distribution when only first- and second-order moment information is available. In a number of applications, however, a statistical reference distribution is also expected to respect an admissible region. Examples include safety envelopes, physically reachable states, feasible design domains, and acceptance regions used for anomaly screening. A Gaussian reference assigns positive probability outside every bounded set, whereas imposing a hard truncation after fitting generally destroys the prescribed covariance and lacks a variational justification.

A second motivation comes from unnormalized statistical models. An energy-based model is written as
\[
 r_\theta(x)=\exp\{-E_\theta(x)\},
 \qquad
 p_\theta(x)=\frac{r_\theta(x)}{\int r_\theta\,\dd\mu_0}.
\]
Adding a constant to the energy multiplies $r_\theta$ by a positive scalar. Hence the statistically relevant object for shape is the ray
\[
 [r_\theta]=\{c r_\theta:c>0\},
\]
not a particular unnormalized representative. This observation suggests formulating maximum entropy on a projective positive cone and treating normalization as a choice of representative rather than as part of the intrinsic object.

Covariance- and moment-constrained maximization of R\'enyi and related power entropies has been studied extensively. Generalized Gaussian densities are extremizers of sharp moment--entropy inequalities~\citep{LutwakYangZhang2005}, and the compactly supported Student-$r$ and heavy-tailed Student-$t$ branches maximize R\'enyi entropy under covariance constraints in their respective parameter ranges~\citep{JohnsonVignat2007}. More general bounds involving multiple moments have also been developed~\citep{Reeves2020}. The associated families have been studied geometrically through $q$-exponential models, R\'enyi geometry, projectively flat divergences, and generalized duality~\citep{AmariOhara2011,Wong2018,WongZhang2022,doi:10.1142/9789811296710_0013}. These results make clear that the distributional form derived below is not new. The question addressed here is instead whether a broad class of projective entropy functionals has the same optimizer and whether that common optimizer can be calibrated from a statistically specified acceptance region.

Proper scoring rules and their induced entropy and divergence functionals provide a complementary line of work~\citep{GneitingRaftery2007,Ovcharov2018}. Density-power, pseudo-spherical, logarithmic gamma, and H\"older-type scores share a common power structure and have distinct roles in robust estimation, affine-equivariant inference, and unnormalized modelling~\citep{BasuHarrisHjortJones1998,FujisawaEguchi2008,KanamoriFujisawa2014Bernoulli,Kanamori2014Entropy,HinoEguchi2023}. A related information-geometric phenomenon is entropy gauge freedom: distinct escort or divergence constructions can induce the same entropy while retaining different relative entropies~\citep{MatsuzoeTakatsu2021}. This distinction motivates separating the diagonal ordering that determines a maximum-entropy optimizer from the off-diagonal and radial structure that determines estimation and scale identification.

The methodological problem considered in this paper is therefore not whether each generalized divergence gives another named distribution. It is to identify the largest useful equivalence class for projective maximum entropy, to establish the common optimizer with global uniqueness under moment constraints, and to turn the deformation parameter into an interpretable acceptance-region parameter.

This paper makes three principal contributions.

First, we define a normalized power functional on the projective space of nonnegative measures and prove a maximum-entropy universality theorem. Any entropy that is an admissible strictly monotone transform of this functional yields exactly the same optimizer under the same moment constraints. This separates the common variational ordering from the distinct estimating properties of individual scoring rules and divergences.

Second, we derive the universal optimizer under linear moment constraints and establish global uniqueness in the mean--covariance problem. The optimizer is a $q$-exponential density. For positive deformation it is compactly supported; for negative deformation in the finite-covariance range it is a Student-type density. The contribution is not the introduction of a new $q$-Gaussian family, but a projective and statistically interpretable characterization that applies simultaneously to a broad class of entropy constructions.

Third, we formulate an acceptance-region calibration theorem. Given mean $\mu$, covariance $V$, and a Mahalanobis acceptance ellipsoid of squared radius $R^2>d+2$, the deformation parameter is uniquely fixed as
\[
 \gamma_R=\frac{2}{R^2-d-2}.
\]
The resulting density is the unique affine-equivariant maximum-entropy reference distribution in the universality class, and the prescribed ellipsoid emerges as its support. This turns the deformation parameter from an abstract tuning constant into a directly interpretable statistical design parameter. In practice, $\mu$ and $V$ may be supplied by robust location and scatter estimators, while $R$ may be specified by domain knowledge, safety requirements, or an externally defined acceptance rule.

The remainder of the paper is organized as follows. \Cref{sec:projective} develops the projective formulation and proves the equivalence between optimization on the quotient space and on the normalized section. \Cref{sec:universality} identifies the common power structure and states the universality theorem. \Cref{sec:qexp} derives the $q$-exponential representation. \Cref{sec:mean-cov} solves the mean--covariance problem, and \Cref{sec:calibration} gives the acceptance-region calibration theorem. \Cref{sec:shape-scale} separates projective shape selection from radial scale identification. Numerical illustrations are given in \Cref{sec:illustrations}, followed by discussion and conclusions. Technical integral calculations and the full proof of the mean--covariance theorem are collected in \Cref{app:mean-cov-proof}.

\section{Projective formulation}\label{sec:projective}

\subsection{The projective positive cone}

Let $(\calX,\mathcal A,\mu_0)$ be a $\sigma$-finite measure space, and let
\[
 \gamma\in(-1,\infty)\setminus\{0\}.
\]
Define
\[
\calM_{+,\gamma}
=
\left\{
 r:\calX\to[0,\infty):
 0<\int r\,\dd\mu_0<\infty,
 \quad
 \int r^{1+\gamma}\,\dd\mu_0<\infty
\right\}.
\]
For $r,s\in\calM_{+,\gamma}$, write $r\sim s$ if $s=cr$ for some $c>0$, and define the projective space
\[
 \mathbb P\calM_{+,\gamma}=\calM_{+,\gamma}/\!\sim.
\]
Each projective class $[r]$ has the unique normalized representative
\[
 \bar r=\frac{r}{\int r\,\dd\mu_0}.
\]

\begin{definition}[Projective power functional]\label{def:projective-power}
For $[r]\in\mathbb P\calM_{+,\gamma}$, define
\begin{equation}
\calJ_\gamma([r])
=
\int \bar r^{1+\gamma}\,\dd\mu_0
=
\frac{\int r^{1+\gamma}\,\dd\mu_0}
{\left(\int r\,\dd\mu_0\right)^{1+\gamma}}.
\label{eq:projective-power}
\end{equation}
\end{definition}

The right-hand side of \cref{eq:projective-power} is unchanged under $r\mapsto cr$, so $\calJ_\gamma$ is well defined on the quotient space. The Shannon functional on the same projective space is
\[
 \calS_0([r])=-\int \bar r\log \bar r\,\dd\mu_0.
\]

\subsection{Moment constraints and the normalized section}

Let $T=(T_1,\ldots,T_m):\calX\to\R^m$ be measurable. For a projective class for which the required moments are finite, define
\[
 \E_{[r]}[T]
 =\int T\bar r\,\dd\mu_0
 =\frac{\int Tr\,\dd\mu_0}{\int r\,\dd\mu_0}.
\]
This quantity depends only on $[r]$. For a prescribed $\tau\in\R^m$, introduce the projective feasible set
\[
 \widetilde{\calC}_{\tau,\gamma}
 =
 \left\{
 [r]\in\mathbb P\calM_{+,\gamma}:
 \int \lvert T_j\rvert\bar r\,\dd\mu_0<\infty,
 \ \E_{[r]}[T]=\tau
 \right\}
\]
and the normalized feasible section
\begin{equation}
\calC_{\tau,\gamma}
=
\left\{
 p\in\calM_{+,\gamma}:
 \int p\,\dd\mu_0=1,
 \quad
 \int \lvert T_j\rvert p\,\dd\mu_0<\infty,
 \quad
 \int Tp\,\dd\mu_0=\tau
\right\}.
\label{eq:feasible-set}
\end{equation}

The next proposition makes precise the statement that optimization on the projective space is the same problem as optimization on the normalized section.

\begin{proposition}[Normalization-section equivalence]\label{prop:normalization-section}
The map
\[
 N:\widetilde{\calC}_{\tau,\gamma}\to\calC_{\tau,\gamma},
 \qquad
 N([r])=\frac{r}{\int r\,\dd\mu_0},
\]
is a bijection. Moreover,
\begin{equation}
 \calJ_\gamma([r])
 =\int N([r])^{1+\gamma}\,\dd\mu_0.
\label{eq:objective-section}
\end{equation}
Consequently, for every real-valued function $F$ on the range of $\calJ_\gamma$,
\begin{equation}
 \sup_{[r]\in\widetilde{\calC}_{\tau,\gamma}}
 F\!\left(\calJ_\gamma([r])\right)
 =
 \sup_{p\in\calC_{\tau,\gamma}}
 F\!\left(\int p^{1+\gamma}\,\dd\mu_0\right),
\label{eq:section-equivalence}
\end{equation}
and maximizers correspond one-to-one under $N$.
\end{proposition}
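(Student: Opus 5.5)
The proof is a direct verification in three steps: $N$ is well defined, it is a bijection, and the objective transports along it.

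\textbf{$N$ is well defined.} The value $N([r])$ does not depend on the representative, since for $c>0$ we have $cr/\int cr\,\dd\mu_0=r/\int r\,\dd\mu_0=\bar r$. We then check that $\bar r\in\calC_{\tau,\gamma}$.
\begin{itemize}
\item $\bar r$ is nonnegative, and $\int\bar r\,\dd\mu_0=1$.
\item $\int\bar r^{1+\gamma}\,\dd\mu_0=(\int r\,\dd\mu_0)^{-(1+\gamma)}\int r^{1+\gamma}\,\dd\mu_0<\infty$ because $0<\int r\,\dd\mu_0<\infty$. Hence $\bar r\in\calM_{+,\gamma}$.
\item The integrability conditions $\int\lvert T_j\rvert\bar r\,\dd\mu_0<\infty$ and the equality $\int T\bar r\,\dd\mu_0=\E_{[r]}[T]=\tau$ are exactly the defining conditions of $\widetilde{\calC}_{\tau,\gamma}$, rewritten in terms of $\bar r$.
\end{itemize}

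\textbf{$N$ is a bijection.} For the inverse, take $p\in\calC_{\tau,\gamma}$. Then $p\in\calM_{+,\gamma}$, and since $\int p\,\dd\mu_0=1$ its normalized representative is $p$ itself. So $[p]$ satisfies $\E_{[p]}[T]=\int Tp\,\dd\mu_0=\tau$ together with the required integrability, which gives $[p]\in\widetilde{\calC}_{\tau,\gamma}$ and $N([p])=p$. This shows surjectivity. For injectivity, suppose $N([r])=N([s])$. Then $s=\bigl(\int s\,\dd\mu_0/\int r\,\dd\mu_0\bigr)\,r$ with a positive constant, so $r\sim s$ and $[r]=[s]$.

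\textbf{The objective transports.} Identity \eqref{eq:objective-section} is just the first expression in \cref{def:projective-power} written with $\bar r=N([r])$. Therefore $F(\calJ_\gamma([r]))=F\bigl(\int N([r])^{1+\gamma}\,\dd\mu_0\bigr)$ for every $[r]\in\widetilde{\calC}_{\tau,\gamma}$. Because $N$ is a bijection, the two families of values over which the suprema in \eqref{eq:section-equivalence} are taken coincide as sets. Hence the suprema agree, including when they equal $+\infty$ or when the feasible sets are empty.

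For maximizers, $[r^*]$ attains the left supremum if and only if $N([r^*])$ attains the right one. This follows from the pointwise equality of the objectives and the bijectivity of $N$, so $N$ restricts to a one-to-one correspondence between the two sets of maximizers.

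No step is a genuine obstacle. The only point that needs care is bookkeeping: the integrability and finiteness conditions defining $\calM_{+,\gamma}$ and the feasible sets must be invariant under positive rescaling. They are, because every integral involved is multiplied by a power of $c>0$ and so stays finite and positive.
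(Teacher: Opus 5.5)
Your proof is correct and follows the paper's argument essentially step for step. Both establish well-definedness of $N$ by scale invariance, membership of $\bar r$ in $\calC_{\tau,\gamma}$, injectivity via $s=(\int s\,\dd\mu_0/\int r\,\dd\mu_0)\,r$, surjectivity via $N([p])=p$, and transport of the objective through the bijection, and your explicit check that $\int\bar r^{1+\gamma}\,\dd\mu_0<\infty$ fills in a detail the paper leaves implicit.
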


\begin{proof}
If $s=cr$ with $c>0$, then
\[
 \frac{s}{\int s\,\dd\mu_0}
 =\frac{cr}{c\int r\,\dd\mu_0}
 =\frac{r}{\int r\,\dd\mu_0},
\]
so $N$ is well defined. The normalized representative has unit mass and preserves every projective moment, hence belongs to $\calC_{\tau,\gamma}$. If $N([r])=N([s])$, then
\[
 s=\frac{\int s\,\dd\mu_0}{\int r\,\dd\mu_0}r,
\]
so $[r]=[s]$ and $N$ is injective. Conversely, for any $p\in\calC_{\tau,\gamma}$, one has $N([p])=p$, proving surjectivity. Equation \cref{eq:objective-section} follows from \cref{eq:projective-power}, and \cref{eq:section-equivalence} follows from the bijection.
\end{proof}

\begin{remark}
Passing to $\calC_{\tau,\gamma}$ does not restrict the projective feasible set. It is a gauge choice: every positive ray intersects the unit-mass section exactly once. The only discarded quantity is the overall mass of an unnormalized representative, which is already identified in the projective quotient.
\end{remark}

\subsection{Admissible projective entropies}

\begin{definition}[$\gamma$-admissible projective entropy]\label{def:admissible}
Let $F_\gamma:(0,\infty)\to\R$ be strictly monotone and satisfy
\[
 F_\gamma \text{ is }
 \begin{cases}
 \text{strictly decreasing},&\gamma>0,\\
 \text{strictly increasing},&-1<\gamma<0.
 \end{cases}
\]
The functional
\begin{equation}
 \calH_{\gamma,F}([r])
 =F_\gamma\!\left(\calJ_\gamma([r])\right)
\label{eq:admissible-entropy}
\end{equation}
is called a $\gamma$-admissible projective entropy.
\end{definition}

Representative choices are listed in \cref{tab:entropy-class}. The terminology refers to the variational ordering, not to equality of the corresponding divergences away from the diagonal.

\begin{table}[t]
\centering
\small
\caption{Representative entropy constructions in the same projective maximum-entropy class. Here $u=\calJ_\gamma([r])$.}
\label{tab:entropy-class}
\begin{tabular}{>{\raggedright\arraybackslash}p{0.30\linewidth}p{0.33\linewidth}p{0.29\linewidth}}
\toprule
Construction & $F_\gamma(u)$ & Comment \\
\midrule
Tsallis entropy \citep{Tsallis1988}
& $(1-u)/\gamma$
& Has the Shannon limit \\
R\'enyi entropy \citep{Renyi1961}
& $-(1/\gamma)\log u$
& Order-equivalent to Tsallis \\
Logarithmic gamma score \citep{FujisawaEguchi2008}
& $-[\gamma(1+\gamma)]^{-1}\log u$
& Positive multiple of R\'enyi \\
H\"older composite score, $\gamma>0$ \citep{KanamoriFujisawa2014Bernoulli}
& $-u$
& Independent of its generating function on the diagonal \\
Generalized H\"older score, $-1<\gamma<0$ \citep{Kanamori2014Entropy}
& $-u^{-1}$
& Reverse-H\"older branch \\
Bregman--H\"older potential \citep{KanamoriFujisawa2014Bernoulli,Kanamori2014Entropy}
& $-u^{\kappa/(1+\gamma)}$
& For the admissible convexity range \\
Dual homogeneous potential \citep{HinoEguchi2023}
& $-u^{\gamma/(1+\gamma)}$
& Same projective power order \\
\bottomrule
\end{tabular}
\end{table}

\section{Universality and the H\"older structure}\label{sec:universality}

\subsection{A common normalized H\"older affinity}

For $\gamma>0$ and probability densities $p,q$, define
\begin{equation}
 \rho_\gamma(p,q)
 =
 \frac{\int p q^\gamma\,\dd\mu_0}
 {\left(\int p^{1+\gamma}\,\dd\mu_0\right)^{1/(1+\gamma)}
  \left(\int q^{1+\gamma}\,\dd\mu_0\right)^{\gamma/(1+\gamma)}}.
\label{eq:holder-affinity}
\end{equation}
H\"older's inequality gives $0\leq\rho_\gamma(p,q)\leq1$, with equality at one if and only if $p=q$ almost everywhere. Several standard discrepancies are monotone or radially weighted functions of the same affinity. For example, up to conventional positive constants, one may write
\begin{align}
 D_\gamma^{\log}(p,q)
 &= -\frac{1}{\gamma}\log\rho_\gamma(p,q),
\label{eq:log-holder-div}\\
 D_\gamma^{\mathrm{ps}}(p,q)
 &= \left(\int p^{1+\gamma}\,\dd\mu_0\right)^{1/(1+\gamma)}
    \{1-\rho_\gamma(p,q)\},
\label{eq:ps-div}\\
 D_\gamma^{\mathrm{dh}}(p,q)
 &= \left(\int q^{1+\gamma}\,\dd\mu_0\right)^{\gamma/(1+\gamma)}
    \{1-\rho_\gamma(p,q)\}.
\label{eq:dual-hom-div}
\end{align}
The superscripts refer, respectively, to the logarithmic H\"older form~\citep{FujisawaEguchi2008}, the pseudo-spherical form~\citep{Good1971}, and the dual homogeneous form~\citep{HinoEguchi2023}. The three discrepancies differ in their radial prefactors and in their behavior under scaling of the arguments, but they share the same projective angular component. This distinction is important: common maximum-entropy behavior does not imply identical estimation or robustness properties.

\subsection{Diagonal entropy of H\"older composite scores}

For $\gamma>0$, a H\"older composite score can be written as
\begin{equation}
 S_{\gamma,\phi}(f,g)
 =
 \phi\!\left(
 \frac{\int f g^\gamma\,\dd\mu_0}
 {\int g^{1+\gamma}\,\dd\mu_0}
 \right)
 \int g^{1+\gamma}\,\dd\mu_0,
\label{eq:holder-score}
\end{equation}
where $\phi(1)=-1$ and $\phi(z)\geq-z^{1+\gamma}$ under the usual properness conditions \citep{KanamoriFujisawa2014Bernoulli}. Its diagonal value is
\begin{equation}
 S_{\gamma,\phi}(p,p)
 =-\int p^{1+\gamma}\,\dd\mu_0,
\label{eq:holder-diagonal}
\end{equation}
which is independent of $\phi$. Thus the generating function changes off-diagonal behavior but not the maximum-entropy ordering. For $-1<\gamma<0$, the scale-invariant extension based on the reverse H\"older inequality has diagonal ordering represented by $-\calJ_\gamma(p)^{-1}$ on its natural domain \citep{Kanamori2014Entropy}.

This separation between diagonal entropy and off-diagonal discrepancy is consistent with the general convex-analytic theory of proper scoring rules, in which the entropy is the diagonal value whereas the associated divergence additionally depends on a supporting functional \citep{Ovcharov2018}. A closely related gauge-freedom phenomenon occurs on $q$-Gaussian families: different escort constructions can yield the same entropy but different relative entropies \citep{MatsuzoeTakatsu2021}. The result below is different in scope. It establishes optimizer equivalence on the full moment-constrained feasible set, rather than only an equivalence of geometric descriptions on a fixed parametric family.

\begin{theorem}[Projective maximum-entropy universality]\label{thm:universality}
Let $\gamma\in(-1,\infty)\setminus\{0\}$ and let $\calC_{\tau,\gamma}$ be defined by \cref{eq:feasible-set}. For every $\gamma$-admissible projective entropy,
\begin{equation}
 \argmax_{p\in\calC_{\tau,\gamma}}
 \calH_{\gamma,F}(p)
 =
 \argmax_{p\in\calC_{\tau,\gamma}}
 \calS_\gamma(p),
 \qquad
 \calS_\gamma(p)=\frac{1-\int p^{1+\gamma}\,\dd\mu_0}{\gamma}.
\label{eq:universality-argmax}
\end{equation}
If $\calC_{\tau,\gamma}$ is convex and an optimizer exists, the optimizer is unique.
\end{theorem}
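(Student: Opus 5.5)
The plan is to reduce both maximization problems to a single extremal problem for the power integral $I(p)=\int p^{1+\gamma}\,\dd\mu_0$ on $\calC_{\tau,\gamma}$. By \cref{prop:normalization-section}, for every $p\in\calC_{\tau,\gamma}$ we have $\calJ_\gamma([p])=I(p)$. Note that $I(p)\in(0,\infty)$, since $p\in\calM_{+,\gamma}$ and $p$ is not a.e.\ zero. Hence $\calH_{\gamma,F}(p)=F_\gamma(I(p))$ is well defined.

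The Tsallis form $\calS_\gamma(p)=(1-I(p))/\gamma$ is itself $F_\gamma^{\mathrm{Ts}}(I(p))$ with $F^{\mathrm{Ts}}_\gamma(u)=(1-u)/\gamma$. This function is strictly decreasing for $\gamma>0$ and strictly increasing for $-1<\gamma<0$. So it is admissible in the sense of \cref{def:admissible}, and it has the same monotonicity direction as every admissible $F_\gamma$ with the same $\gamma$.

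The core step is then an elementary order argument. Let $G=F_\gamma\circ (F^{\mathrm{Ts}}_\gamma)^{-1}$, defined on the image of $(0,\infty)$ under $F^{\mathrm{Ts}}_\gamma$. It is a composition of two strictly monotone maps with the same direction, so it is strictly increasing, and $\calH_{\gamma,F}(p)=G(\calS_\gamma(p))$ for every feasible $p$. For a strictly increasing $G$ and any $p^*,p\in\calC_{\tau,\gamma}$,
\[
\calS_\gamma(p^*)\ge \calS_\gamma(p)\iff G(\calS_\gamma(p^*))\ge G(\calS_\gamma(p)).
\]
Therefore $p^*$ maximizes one functional over $\calC_{\tau,\gamma}$ if and only if it maximizes the other. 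Both argmax sets equal the set of minimizers of $I$ when $\gamma>0$, and the set of maximizers of $I$ when $-1<\gamma<0$, which gives \cref{eq:universality-argmax}. The identity also holds trivially when both sets are empty.

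For uniqueness, I would use strict convexity or concavity of $t\mapsto t^{1+\gamma}$ on $[0,\infty)$. For $\gamma>0$ this map is strictly convex, so $I$ is strictly convex on the convex set $\calC_{\tau,\gamma}$: whenever $p\neq q$ on a set of positive measure and $\lambda\in(0,1)$, we get $I(\lambda p+(1-\lambda)q)<\lambda I(p)+(1-\lambda)I(q)$. For $-1<\gamma<0$ the map is strictly concave, so $I$ is strictly concave. Now suppose $p_1\ne p_2$ are two optimizers with common optimal value $I^*$. The midpoint lies in $\calC_{\tau,\gamma}$ by the assumed convexity, and its value of $I$ is strictly below $I^*$ (case $\gamma>0$) or strictly above it (case $\gamma<0$). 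This contradicts optimality, and by the first part the optimizer is then unique for every admissible $F$.

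The main obstacle I expect is the measure-theoretic detail in this uniqueness step. I need to check that the midpoint still has finite $I$; this holds because $(a+b)^{1+\gamma}\le 2^{\gamma}(a^{1+\gamma}+b^{1+\gamma})$ for $\gamma>0$, and because $I$ of the midpoint is bounded by $1$-related quantities via concavity when $\gamma<0$. The convexity hypothesis guarantees that moment integrability is preserved. I also need strictness of the pointwise inequality on the positive-measure set $\{p_1\ne p_2\}$, which integrates to a strict inequality because the integrals are finite. Finally, if $I^*$ were infinite in some degenerate case, the definition of $\calM_{+,\gamma}$ excludes it, so the comparison is between finite numbers.
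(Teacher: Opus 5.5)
Your proposal is correct and is essentially the paper's argument. Both reduce every admissible entropy to extremizing $\int p^{1+\gamma}\,\dd\mu_0$, using that $F_\gamma$ and the Tsallis transform share the same monotonicity direction, and both get uniqueness from strict convexity ($\gamma>0$) or strict concavity ($-1<\gamma<0$) of $t\mapsto t^{1+\gamma}$; you just make this explicit through the increasing map $G$ and the midpoint argument. One side remark is loose: your finiteness check for $I$ at the midpoint is unnecessary, since the assumed convexity of $\calC_{\tau,\gamma}$ already places the midpoint in $\calM_{+,\gamma}$, and for $\gamma<0$ the correct direct tool would be subadditivity $(a+b)^{1+\gamma}\le a^{1+\gamma}+b^{1+\gamma}$ rather than a concavity bound, which gives nothing on infinite measure spaces.
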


\begin{proof}
For $\gamma>0$, maximizing $\calS_\gamma$ is equivalent to minimizing $\int p^{1+\gamma}\,\dd\mu_0$. Every admissible $F_\gamma$ is strictly decreasing, so it induces the same minimization problem. For $-1<\gamma<0$, maximizing $\calS_\gamma$ is equivalent to maximizing the power integral, and admissible $F_\gamma$ is strictly increasing. Uniqueness follows because $t\mapsto t^{1+\gamma}$ is strictly convex for $\gamma>0$ and strictly concave for $-1<\gamma<0$.
\end{proof}

\begin{corollary}[Common optimizer for the H\"older class]\label{cor:holder-universality}
For fixed $\gamma$, the projective maximum-entropy problems induced by Tsallis and R\'enyi entropies, H\"older composite scores, logarithmic gamma scores, pseudo-spherical scores, Bregman--H\"older potentials, and dual homogeneous potentials have the same solution whenever they are defined on the same feasible set.
\end{corollary}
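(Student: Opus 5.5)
The plan is to reduce the corollary to \cref{thm:universality} by checking, construction by construction, that the diagonal value of each entropy, written as a function of the projective class, has the form $F_\gamma(\calJ_\gamma([r]))$ with $F_\gamma$ $\gamma$-admissible in the sense of \cref{def:admissible}. Once this holds, \cref{prop:normalization-section} moves every problem onto the common normalized section $\calC_{\tau,\gamma}$. \Cref{thm:universality} then identifies each argmax with $\argmax_{p\in\calC_{\tau,\gamma}}\calS_\gamma(p)$, so all the argmax sets coincide. The uniqueness clause of the theorem carries over verbatim whenever the feasible set is convex and an optimizer exists. The hypothesis ``defined on the same feasible set'' is exactly what lets us compare argmax sets over one common $\calC_{\tau,\gamma}$.

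The verification follows the rows of \cref{tab:entropy-class} with $u=\calJ_\gamma([r])>0$.

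\textbf{Tsallis.} Here $F_\gamma(u)=(1-u)/\gamma$. It is decreasing for $\gamma>0$ and increasing for $\gamma<0$, and it is literally $\calS_\gamma$.

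\textbf{R\'enyi and logarithmic gamma.} These use $-c\log u$ with $c=1/\gamma$ or $c=[\gamma(1+\gamma)]^{-1}$. Since $1+\gamma>0$, $c$ has the sign of $\gamma$, which gives the right monotonicity on each branch.

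\textbf{H\"older composite, $\gamma>0$.} By \cref{eq:holder-diagonal} the diagonal entropy is $-u$ regardless of $\phi$, and this is strictly decreasing.

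\textbf{Reverse-H\"older branch, $-1<\gamma<0$.} The diagonal is $-u^{-1}$, which is strictly increasing.

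\textbf{Pseudo-spherical, dual homogeneous and Bregman--H\"older.} I would take the diagonal entropy as the negative of the normalized radial factor. For the pseudo-spherical score this is $-u^{1/(1+\gamma)}$, matching the dual homogeneous entry $-u^{\gamma/(1+\gamma)}$ up to the sign conventions of \citep{HinoEguchi2023}. For Bregman--H\"older it is $-u^{\kappa/(1+\gamma)}$. Each is a power $-u^{a}$, strictly monotone with the direction set by $\operatorname{sign}(a)$. On each branch one must check that the sign convention of the source makes the map decreasing for $\gamma>0$ and increasing for $\gamma<0$, restricting $\kappa$ to the ``admissible convexity range'' where this holds.

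The main obstacle is not the argmax argument but this bookkeeping. For the scoring-rule constructions the ``entropy'' is only defined up to the convention of the original paper, for example whether the score is negatively oriented. For $\gamma<0$ the exponent $\gamma/(1+\gamma)$ is negative, so the naive $-u^{\gamma/(1+\gamma)}$ is decreasing and the entropy must be taken with the opposite sign. I would resolve this by defining each entropy as minus the diagonal expected score of a proper (negatively oriented) score. Properness forces the diagonal to be concave in $p$, and on the power family this concavity pins down the admissible direction of monotonicity.

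If some construction is only defined for one sign of $\gamma$, the corollary is asserted only on that branch. This is consistent with the phrase ``whenever they are defined on the same feasible set.''
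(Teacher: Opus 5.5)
Your reduction is the one the paper intends. The corollary is stated without further proof, as an immediate consequence of \cref{thm:universality} once each construction's diagonal entropy is written as $F_\gamma(\calJ_\gamma)$ with $F_\gamma$ admissible, as in \cref{tab:entropy-class}. Your checks for Tsallis, R\'enyi, logarithmic gamma, the H\"older composite score for $\gamma>0$, and the reverse-H\"older branch are correct.

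The trouble is the sign bookkeeping in your last two paragraphs, which is the only real content of this proof. You claim that for $-1<\gamma<0$ the map $u\mapsto -u^{\gamma/(1+\gamma)}$ is decreasing and must be sign-flipped. It is the other way round. Since $\gamma/(1+\gamma)<0$, $u^{\gamma/(1+\gamma)}$ is decreasing, so $-u^{\gamma/(1+\gamma)}$ is strictly increasing, exactly as your own rule (direction set by the sign of the exponent) predicts. The dual homogeneous entry is therefore admissible on both branches as written. The flipped version $u^{\gamma/(1+\gamma)}$ would be decreasing for $\gamma<0$. Maximizing it would then mean minimizing the concave power integral, and \cref{thm:universality} would no longer cover that construction.

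The entry that actually fails on the negative branch is your pseudo-spherical diagonal $-u^{1/(1+\gamma)}$. Its exponent is positive for every $\gamma>-1$, so it is decreasing on both branches. It also does not ``match'' $-u^{\gamma/(1+\gamma)}$ up to sign: the exponents differ, and the two orderings agree only for $\gamma>0$. No separate row is needed. For $\gamma>0$ the pseudo-spherical score is the H\"older composite member $\phi(z)=-z^{1+\gamma}$ (\cref{lem:radial}), with diagonal $-u$ by \cref{eq:holder-diagonal}. For $-1<\gamma<0$ the reverse H\"older inequality reverses the orientation. The proper score is then $+\int fg^\gamma/(\int g^{1+\gamma})^{\gamma/(1+\gamma)}$, whose diagonal $+u^{1/(1+\gamma)}$ is increasing, as required.

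Your proposed global fix is also unreliable as phrased, for two reasons. First, for a loss-oriented score such as \cref{eq:holder-score}, where $S(f,g)\geq S(f,f)$, the entropy is the diagonal itself. Taking minus it gives $+u$, which is inadmissible for $\gamma>0$. Second, properness yields concavity of the diagonal only when the score is affine in its first argument, and the composite scores here are not. For example, the logarithmic gamma score is proper, yet its diagonal is a positive multiple of the R\'enyi entropy of order $1+\gamma$, which fails to be concave for large $\gamma$.

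Replace that step by directly checking that each listed $F_\gamma$ has the monotonicity required in \cref{def:admissible}. For Bregman--H\"older, this means checking that $\kappa$ has the sign of $\gamma$.
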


\subsection{Implication of affine-invariance characterizations}

Within a regular two-integral composite-score class of the form
\[
 S(f,g)=\psi\!\left(\int fU(g)\,\dd\mu_0,\int V(g)\,\dd\mu_0\right),
\]
affine invariance of the associated divergence restricts the nontrivial power branch to
\[
 U(z)=z^\gamma+c,
 \qquad
 V(z)=z^{1+\gamma},
\]
and the logarithmic branch to
\[
 U(z)=-\log z+c,
 \qquad
 V(z)=z,
\]
under the regularity assumptions stated in~\citet{KanamoriFujisawa2014Bernoulli}. On compact domains, the scale-invariant characterization extends to negative powers and includes the Itakura--Saito limit~\citep{Kanamori2014Entropy}. Combined with~\cref{thm:universality}, these results imply the following restricted characterization.

\begin{corollary}[Maximum-entropy families in the regular composite-score class]\label{cor:composite-characterization}
Under the regularity and invariance assumptions of the cited two-integral composite-score characterization, the induced projective maximum-entropy family is an exponential family in the logarithmic case and a $q$-exponential family in the power case.
\end{corollary}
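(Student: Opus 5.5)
The plan is to combine the cited composite-score characterization with \cref{thm:universality} and then solve the resulting variational problem by Lagrange multipliers. The proof has two stages. The first identifies the diagonal entropy in each branch, and the second derives the form of the optimizer on $\calC_{\tau,\gamma}$.

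\textbf{Step 1: diagonal entropy in each branch.} In the power branch $U(z)=z^\gamma+c$ and $V(z)=z^{1+\gamma}$. On the normalized section the diagonal score is $\psi\bigl(\int p^{1+\gamma}\,\dd\mu_0+c,\int p^{1+\gamma}\,\dd\mu_0\bigr)$, a function of $u=\calJ_\gamma(p)$ alone, because $\int p\,\dd\mu_0=1$. This is where \cref{prop:normalization-section} is used.

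Properness of the score, a hypothesis of the cited characterization, forces the map from $u$ to the negative diagonal score to be strictly monotone in the direction required by \cref{def:admissible}. This mirrors \cref{eq:holder-diagonal} for $\gamma>0$ and the reverse-H\"older branch for $-1<\gamma<0$. So the induced entropy is $\gamma$-admissible. \cref{thm:universality} then reduces the problem to maximizing $\calS_\gamma$ over $\calC_{\tau,\gamma}$.

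In the logarithmic branch the diagonal is $\psi\bigl(-\int p\log p\,\dd\mu_0+c,1\bigr)$. This is a strictly monotone function of the Shannon functional $\calS_0$, so the problem reduces to Shannon maximization under the same moment constraints.

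\textbf{Step 2: form of the optimizer.} I would perturb $p$ along $p+\epsilon h$ with $\int h\,\dd\mu_0=0$ and $\int Th\,\dd\mu_0=0$, and use stationarity conditions with multipliers $\lambda_0,\lambda\in\R^m$.

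For Shannon, stationarity gives $\log p=\lambda_0+\lambda^\top T$, which is an exponential family.

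For $\calS_\gamma$, stationarity on $\{p>0\}$ gives $-(1+\gamma)p^\gamma/\gamma=\lambda_0+\lambda^\top T$. Hence $p\propto\pos{1+\gamma\,\theta^\top T}^{1/\gamma}$ up to reparametrization, which is a $q$-exponential density with $q=1-\gamma$.

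For sufficiency I would avoid differentiating. Strict convexity or concavity of $t\mapsto t^{1+\gamma}$ gives the tangent-line inequality $p^{1+\gamma}\ge p_*^{1+\gamma}+(1+\gamma)p_*^\gamma(p-p_*)$, with the reverse inequality for $\gamma<0$. Integrating and using the moment constraints, the linear term vanishes on the support of $p_*$. Off the support, the positive-part truncation makes the sign correct. This shows the candidate is the global optimizer. The Gibbs inequality plays the same role in the Shannon case.

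\textbf{Main obstacle.} The delicate step is justifying that properness of the cited class yields monotonicity in the correct direction, together with the support and boundary issues of the $q$-exponential form. When $\gamma>0$, the optimizer vanishes on a region, and stationarity holds only as a KKT inequality there. I would handle this through the tangent-line argument above rather than through first-order conditions, so that existence of multipliers is assumed only in the form needed. The corollary is read as saying the family of optimizers is of this form whenever an optimizer exists.
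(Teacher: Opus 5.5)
Your plan is essentially the paper's argument: the cited characterization makes the induced entropy a strictly monotone function of $\calJ_\gamma$ (respectively of the Shannon functional), \cref{thm:universality} reduces the problem to maximizing $\calS_\gamma$, and the stationarity computation of \cref{thm:qexp}, which you supplement with a tangent-line sufficiency check like the one in \cref{app:positive-optimality}, gives the $q$-exponential (respectively exponential) family. There are two small slips. First, in the paper's loss convention the entropy is the diagonal score itself ($F_\gamma(u)=-u$ by \cref{eq:holder-diagonal}), not its negative, and the paper reads off this monotonicity from the cited H\"older and reverse-H\"older representations rather than deriving it from properness. Second, keep the general form $\pos{a+\lambda^\top T}^{1/\gamma}$: for $\gamma>0$ the constant $a$ can be nonpositive (with $T(x)=(x,xx^\top)$ this happens whenever $\mu^\top V^{-1}\mu\geq R_\gamma^2$), so $p\propto\pos{1+\gamma\,\theta^\top T}^{1/\gamma}$ does not cover all optimizers.
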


\begin{remark}
\Cref{cor:composite-characterization} is not a characterization of all proper scoring rules. In particular, local scores depending on derivatives of the predictive density lie outside the two-integral class. Such scores have a separate characterization and, from order two onward, can often be evaluated without knowledge of the normalizing constant \citep{ParryDawidLauritzen2012}. Moreover, the negative-power results on noncompact Euclidean spaces used below are obtained directly from the projective power problem, not from the compact-domain scale-invariance theorem.
\end{remark}

\section{Universal optimizer under linear moment constraints}\label{sec:qexp}

\begin{theorem}[$q$-exponential representation]\label{thm:qexp}
Let $\gamma\in(-1,\infty)\setminus\{0\}$, and suppose that a maximum-entropy solution $p^*$ exists in $\calC_{\tau,\gamma}$ and satisfies the usual variational and Karush--Kuhn--Tucker regularity conditions. Then there exist $a\in\R$ and $\lambda\in\R^m$ such that
\begin{equation}
 p^*(x)=
 \begin{cases}
 \pos{a+\lambda^\top T(x)}^{1/\gamma},&\gamma>0,\\[1mm]
 \{a+\lambda^\top T(x)\}^{1/\gamma},&-1<\gamma<0,
 \end{cases}
\label{eq:qexp-general}
\end{equation}
where the bracket in the negative-power case must remain positive on the support of a finite interior solution. The constants are determined by normalization and the moment constraints.
\end{theorem}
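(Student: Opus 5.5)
By \cref{thm:universality} we may work with the single functional $\calS_\gamma$, or equivalently with $\Phi(p)=\int p^{1+\gamma}\,\dd\mu_0$. We minimize $\Phi$ when $\gamma>0$ and maximize it when $-1<\gamma<0$, over the normalized section $\calC_{\tau,\gamma}$, which \cref{prop:normalization-section} identifies with the projective problem. The plan is to write the Lagrangian
\[
 L(p)=\frac{1}{\gamma}\Phi(p)-\alpha\Big(\int p\,\dd\mu_0-1\Big)-\beta^\top\Big(\int Tp\,\dd\mu_0-\tau\Big),
\]
with a nonnegativity multiplier for $p\ge0$. The $1/\gamma$ normalization makes both signs of $\gamma$ correspond to minimizing a convex functional: $t\mapsto t^{1+\gamma}/\gamma$ is strictly convex for $\gamma>0$ and, since $t^{1+\gamma}$ is concave with $\gamma<0$, also convex for $-1<\gamma<0$. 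So minimizing $\Phi/\gamma$ is exactly maximizing $\calS_\gamma$. We then derive the stationarity condition pointwise.

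\textbf{Step 1 (first variation).} Perturb $p^*$ by $p^*+\varepsilon h$ with $h$ bounded and supported where the perturbation stays feasible. The assumed regularity lets us differentiate under the integral. The directional derivative of $\Phi/\gamma$ is $\frac{1+\gamma}{\gamma}\int (p^*)^{\gamma}h\,\dd\mu_0$. The KKT conditions then give
\[
 \frac{1+\gamma}{\gamma}(p^*)^{\gamma}-\alpha-\beta^\top T\ \ge 0
\]
$\mu_0$-a.e., with equality on $\{p^*>0\}$. This is the complementary slackness obtained by allowing only $h\ge0$ on $\{p^*=0\}$.

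\textbf{Step 2 (solve pointwise).} On $\{p^*>0\}$, set $a=\gamma\alpha/(1+\gamma)$ and $\lambda=\gamma\beta/(1+\gamma)$. Then $(p^*)^\gamma=a+\lambda^\top T$.

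For $\gamma>0$, on the zero set the inequality reads $0\ge a+\lambda^\top T$ after multiplying by $\gamma/(1+\gamma)>0$. Hence $p^*=\pos{a+\lambda^\top T}^{1/\gamma}$ everywhere.

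For $-1<\gamma<0$, the map $t\mapsto t^\gamma$ blows up at $0$. The derivative of the objective is therefore $-\infty$ in the direction of adding mass where $p^*=0$, so a zero set of positive measure inside the region where perturbations are admissible contradicts optimality. This is the content of the phrase ``finite interior solution'' in the statement. On the support, $(p^*)^\gamma=a+\lambda^\top T>0$ is forced, which gives $p^*=\{a+\lambda^\top T\}^{1/\gamma}$.

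\textbf{Step 3 (constants).} The $m+1$ unknowns $(a,\lambda)$ are fixed by the unit-mass constraint and the $m$ moment equations $\int Tp^*\,\dd\mu_0=\tau$.

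\textbf{Main obstacle.} The delicate point is justifying Step 1 rigorously. One issue is differentiability of $\Phi$ along admissible directions when $T$ is unbounded. The other is the existence of finite multipliers, i.e.\ a constraint qualification. I would handle both by invoking the stated KKT regularity hypothesis: restrict to bounded perturbations $h$ supported on sets where $T$ is bounded and $p^*$ is bounded away from $0$ and $\infty$, and use dominated convergence there. The multipliers then come from the finite-dimensional linear constraints via a Slater-type interior point, which is exactly what the hypothesis supplies.

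The negative-$\gamma$ boundary argument also needs care: one must check that an admissible mass-adding direction exists. That direction should be a perturbation that moves mass from the support to the zero set while compensating the moments with $m+1$ test functions, and it should be available under the same hypothesis.
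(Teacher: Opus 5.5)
Your proposal is correct and follows essentially the same route as the paper. Both reduce to $\calS_\gamma$ via \cref{thm:universality}, take the first variation of the Lagrangian, reparameterize the multipliers to obtain $(p^*)^\gamma=a+\lambda^\top T$ on the support, use complementary slackness to get the positive part when $\gamma>0$, and require a positive bracket when $-1<\gamma<0$. Your sign convention (minimizing $\Phi/\gamma$) and your infinite-derivative argument ruling out a non-null zero set for $\gamma<0$ only make explicit what the paper leaves implicit in the phrase ``finite interior solution.''
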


\begin{proof}
By \cref{thm:universality}, it is sufficient to maximize $\calS_\gamma$. The Lagrangian is
\[
 \mathcal L(p)
 =
 \frac{1-\int p^{1+\gamma}\,\dd\mu_0}{\gamma}
 -\alpha\left(\int p\,\dd\mu_0-1\right)
 -\lambda^\top\left(\int Tp\,\dd\mu_0-\tau\right).
\]
On the region where $p>0$, the first variation gives
\[
 -\frac{1+\gamma}{\gamma}p(x)^\gamma
 -\alpha-\lambda^\top T(x)=0.
\]
After reparameterizing the multipliers, this becomes $p(x)^\gamma=a+\lambda^\top T(x)$. For $\gamma>0$, complementary slackness for the nonnegativity constraint produces the positive part in \cref{eq:qexp-general}. For $-1<\gamma<0$, the exponent is negative, so the affine bracket must be positive wherever the density is finite.
\end{proof}

\begin{corollary}[Shannon limit]\label{cor:shannon-limit}
Under a regular parameterization of the multipliers, the family in \cref{eq:qexp-general} converges as $\gamma\to0$ to the ordinary exponential-family form
\[
 p_0^*(x)\propto\exp\{\lambda^\top T(x)\}.
\]
\end{corollary}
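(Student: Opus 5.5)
The plan is to parameterize the multipliers in \cref{eq:qexp-general} so that the $\gamma\to0$ limit becomes the classical compound-interest limit $(1+\gamma s)^{1/\gamma}\to e^{s}$. \Cref{thm:qexp} gives $p^*(x)^\gamma=a+\lambda^\top T(x)$ on the support. In its proof the multipliers $a,\lambda$ come from $\alpha,\lambda$ divided by $-(1+\gamma)/\gamma$, so they blow up or collapse as $\gamma\to0$. I will therefore introduce a \emph{regular parameterization}:
\[
 a=a_\gamma\,(1+\gamma c_\gamma),\qquad \lambda=a_\gamma\,\gamma\,\eta_\gamma ,
\]
where $a_\gamma>0$ and $(c_\gamma,\eta_\gamma)\to(c_0,\eta_0)$ as $\gamma\to0$. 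This is the same as writing
\[
p^*(x)=a_\gamma^{1/\gamma}\,\exp_q\!\bigl(c_\gamma+\eta_\gamma^\top T(x)\bigr),\qquad \exp_q(s)=\pos{1+\gamma s}^{1/\gamma}.
\]
The factor $a_\gamma^{1/\gamma}$ is a positive constant. Projectively it is irrelevant, since by \cref{prop:normalization-section} only the ray matters, and it is absorbed by normalization. For $-1<\gamma<0$ the same representation holds without the positive part, provided the bracket is positive.

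The key step is pointwise convergence. For fixed $x$, put $s_\gamma=c_\gamma+\eta_\gamma^\top T(x)\to s_0$. Then
\[
\tfrac1\gamma\log(1+\gamma s_\gamma)=s_\gamma-\tfrac{\gamma}{2}s_\gamma^2+O(\gamma^2)\to s_0 .
\]
For small $|\gamma|$ the bracket $1+\gamma s_\gamma$ is eventually positive at every fixed $x$, so the positive part is inactive in the limit. Hence $\exp_q(s_\gamma)\to e^{c_0}\exp\{\eta_0^\top T(x)\}$. Renaming $\eta_0$ as $\lambda$ gives $p_0^*\propto\exp\{\lambda^\top T\}$.

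The main obstacle is that this argument only gives convergence of the unnormalized rays pointwise. To conclude that the normalized densities converge, I need the normalizing integrals to converge as well. I would handle this by dominated convergence, in two cases.

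\emph{Case $\gamma>0$.} Here $\log(1+\gamma s)\le\gamma s$, so $\exp_q(s)\le e^{s}$. This gives the dominating function $e^{C}\exp\{\eta^\top T\}$, which is integrable whenever the exponential-family partition function is finite near $\eta_0$.

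\emph{Case $\gamma<0$.} The inequality reverses, and I would instead restrict to the finite-moment range and use a uniform bound valid for $|\gamma|$ small. This is where the regularity hypothesis in the statement is needed.

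Finally, the same dominated-convergence argument applied to $\int T p^*\dd\mu_0$ shows that the moment constraints pass to the limit. The limit is then the exponential-family form, and by the classical Shannon theory it is the maximizer of $\calS_0$.
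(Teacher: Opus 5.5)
The paper gives no proof of this corollary; it treats it as an immediate consequence of the standard limit $\pos{1+\gamma s}^{1/\gamma}\to e^{s}$, once the multipliers are rescaled so that $a\to1$ and $\lambda=O(\gamma)$. Your reparameterization $a=a_\gamma(1+\gamma c_\gamma)$, $\lambda=a_\gamma\gamma\eta_\gamma$ and your pointwise step are exactly this, so at its core your argument is the intended one and is correct.

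What you add is convergence of the normalized densities and of the moments by dominated convergence. The paper neither states nor checks this, so your version says more than the paper does. Two points in that extra part need tightening.

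First, $\eta_\gamma$ moves with $\gamma$, so the single function $e^{C}\exp\{\eta^\top T\}$ is not a dominating function. Use convexity of $\eta\mapsto\exp\{\eta^\top T(x)\}$ instead. For $\eta$ in a small cube around $\eta_0$, lying inside the interior of the natural parameter space, this function is bounded by the sum of its values at the $2^m$ vertices.

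Second, your unspecified uniform bound for $\gamma<0$ can be made concrete. Write $\frac1\gamma\log(1+\gamma s)=s\,\log(1+\gamma s)/(\gamma s)$. Since $y\mapsto\log(1+y)/y$ is decreasing on $(-1,\infty)$, the map $\gamma\mapsto(1+\gamma s)^{1/\gamma}$ is nonincreasing wherever the bracket is positive. Hence $e^{s}\le(1+\gamma s)^{1/\gamma}\le(1+\gamma_1 s)^{1/\gamma_1}$ whenever $\gamma_1\le\gamma<0$ and $1+\gamma_1 s>0$. The member at one fixed $\gamma_1<0$ therefore dominates the whole approach from the left; in the mean--covariance case this is a Student density with large $\nu$. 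Combine this with convexity in $\eta$ as before, which works because $s\mapsto(1+\gamma_1 s)^{1/\gamma_1}$ is convex and increasing. You also need the bracket to stay positive and $\lvert T\rvert$ times this function to be integrable; both can be absorbed into your regularity hypothesis.
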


\section{Mean and covariance constraints}\label{sec:mean-cov}

Let $\calX=\R^d$ with Lebesgue measure. Fix $\mu\in\R^d$ and a positive-definite matrix $V$, and write
\[
 Q_{\mu,V}(x)=(x-\mu)^\top V^{-1}(x-\mu).
\]
Let $\calP_{\mu,V}$ denote the class of probability densities with mean $\mu$ and covariance $V$ for which the relevant power integral is finite.

The covariance-constrained optimizer itself is classical in the R\'enyi-entropy literature. Generalized Gaussian densities are extremizers of sharp moment--entropy inequalities~\citep{LutwakYangZhang2005}, and the compactly supported Student-$r$ and heavy-tailed Student-$t$ branches are known to maximize R\'enyi entropy under covariance constraints~\citep{JohnsonVignat2007}. Extensions to bounds involving more than one moment constraint are also available~\citep{Reeves2020}. Accordingly, the contribution of~\cref{thm:mean-cov} is not a new optimizer formula. Its role is to place both branches inside the projective universality theorem, establish the exact parameterization by the prescribed covariance, and provide the support-radius identity used for the inverse calibration in \cref{sec:calibration}.

\begin{theorem}[Universal mean--covariance maximum entropy]\label{thm:mean-cov}
The following statements hold.

\medskip
\noindent
(i) If $\gamma>0$, the unique maximizer of every $\gamma$-admissible projective entropy over $\calP_{\mu,V}$ is
\begin{equation}
 p_\gamma(x;\mu,V)
 =
 \frac{A_{\gamma,d}}{|V|^{1/2}}
 \left(1-a_{\gamma,d}Q_{\mu,V}(x)\right)_+^{1/\gamma},
\label{eq:qgaussian-positive}
\end{equation}
where
\begin{align}
 a_{\gamma,d}
 &=\frac{\gamma}{2+\gamma(d+2)},
\label{eq:a-positive}\\
 A_{\gamma,d}
 &=
 \frac{a_{\gamma,d}^{d/2}
 \Gamma(1+\gamma^{-1}+d/2)}
 {\pi^{d/2}\Gamma(1+\gamma^{-1})}.
\label{eq:A-positive}
\end{align}
Its support satisfies
\begin{equation}
 \overline{\supp(p_\gamma)}
 =
 \left\{x:Q_{\mu,V}(x)\leq R_\gamma^2\right\},
 \qquad
 R_\gamma^2=d+2+\frac{2}{\gamma}.
\label{eq:Rgamma}
\end{equation}

\medskip
\noindent
(ii) If $-2/(d+2)<\gamma<0$, the unique maximizer is
\begin{equation}
 p_\gamma(x;\mu,V)
 =
 \frac{\Gamma((\nu+d)/2)}
 {\Gamma(\nu/2)\{\pi(\nu-2)\}^{d/2}|V|^{1/2}}
 \left(1+\frac{Q_{\mu,V}(x)}{\nu-2}\right)^{-(\nu+d)/2},
\label{eq:student-branch}
\end{equation}
where
\begin{equation}
 \nu=-\frac{2}{\gamma}-d>2.
\label{eq:nu-gamma}
\end{equation}
This is a multivariate Student distribution with degrees of freedom $\nu$, location $\mu$, scale matrix $(\nu-2)V/\nu$, and covariance $V$.
\end{theorem}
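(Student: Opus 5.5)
By \cref{thm:universality}, the optimizer set is the same for every $\gamma$-admissible entropy. It therefore suffices to work with $\int p^{1+\gamma}\,\dd x$, minimizing it when $\gamma>0$ and maximizing it when $-2/(d+2)<\gamma<0$, over $\calP_{\mu,V}$. I will not rely on the KKT heuristic of \cref{thm:qexp} for existence. Instead I will prove global optimality directly with a Bregman-type (tangent-line) inequality.

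First, reduce by the affine change of variables $x=\mu+V^{1/2}z$. Then $\int p^{1+\gamma}$ scales by $|V|^{-\gamma/2}$, uniformly over the class, so it suffices to take $\mu=0$, $V=I$. Next, check that the candidate $p_\gamma$ lies in $\calP_{0,I}$. This is the computational step, done in polar coordinates. Substituting $t=a Q$ reduces the normalization and second moment to Beta integrals:
\[
\int (1-a|z|^2)_+^{1/\gamma}|z|^{2k}\dd z=\frac{\pi^{d/2}}{\Gamma(d/2)}a^{-d/2-k}B(d/2+k,1+1/\gamma).
\]
The choice $a_{\gamma,d}=\gamma/(2+\gamma(d+2))$ is exactly what makes $\E|z|^2=d$. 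Indeed, the ratio of the $k=1$ and $k=0$ integrals is $(d/2)/(a(1+1/\gamma+d/2))$, and setting it equal to $d$ gives $a$. The constant $A_{\gamma,d}$ then follows from the $k=0$ integral. The support radius is $R_\gamma^2=1/a_{\gamma,d}=(d+2)+2/\gamma$. For (ii) I would identify the density as the standard Student form and use the known covariance $\nu/(\nu-2)$ times the scale. The condition $\nu>2$ is equivalent to $\gamma>-2/(d+2)$, which is also what is needed for the covariance and for $\int p^{1+\gamma}<\infty$.

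The key step is optimality. Write $p_\gamma^\gamma=c_0+c_1|z|^2$ on the support; for $\gamma>0$ we have $c_1<0$ and $p_\gamma^\gamma=0$ off the support. For any $p\in\calP_{0,I}$, apply the convexity inequality $u^{1+\gamma}\ge v^{1+\gamma}+(1+\gamma)v^\gamma(u-v)$ with $u=p$, $v=p_\gamma$, valid for $\gamma>0$; the reverse inequality holds for $-1<\gamma<0$ because the map is concave. This gives
\[
\int p^{1+\gamma}-\int p_\gamma^{1+\gamma}\ge(1+\gamma)\int p_\gamma^\gamma(p-p_\gamma).
\]
In case (ii) the right side vanishes exactly, since $p_\gamma^\gamma$ is affine in $|z|^2$ everywhere and $p,p_\gamma$ share mass and $\E|z|^2$. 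In case (i), split the integral over the support and its complement. On the complement $p_\gamma^\gamma=0$, while $c_0+c_1|z|^2<0$ there. This gives $\int p_\gamma^\gamma(p-p_\gamma)\ge\int (c_0+c_1|z|^2)(p-p_\gamma)=0$, and the inequality has the right sign for minimization. Uniqueness follows from strict convexity or concavity: equality forces $p=p_\gamma$ almost everywhere, consistent with the uniqueness clause of \cref{thm:universality}.

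The main obstacle I expect is the sign bookkeeping for the positive-part branch in case (i), namely handling the region outside the ellipsoid where $p$ may put mass. A secondary difficulty is justifying the integrability that makes the tangent inequality's right-hand side finite in case (ii). There I would use $p_\gamma^\gamma=O(1+|z|^2)$ together with finite second moments of $p$.
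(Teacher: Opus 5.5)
Your proposal is correct and follows essentially the same route as the paper's appendix proof. Both reduce to $\mu=0$, $V=I_d$ by an affine change of variables, calibrate $a_{\gamma,d}$ and $A_{\gamma,d}$ through beta integrals, and prove global optimality from the tangent-line inequality for $t\mapsto t^{1+\gamma}$, using $(c_0+c_1\norm{z}^2)_+\geq c_0+c_1\norm{z}^2$ in the positive branch and exact cancellation of the affine term $g^\gamma$ in the Student branch; your remarks on the $|V|^{-\gamma/2}$ scaling and on integrability of the linear term only make explicit what the paper leaves implicit.
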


The complete proof, including the beta-integral calculations, covariance calibration, and global optimality arguments, is given in \cref{app:mean-cov-proof}. The proof is global rather than a purely formal Euler--Lagrange calculation: convexity or concavity of the power integral and the moment constraints certify optimality over the full feasible class.

\begin{corollary}[Gaussian limit]\label{cor:gaussian-limit}
As $\gamma\to0+$ or $\gamma\to0-$, the density $p_\gamma(x;\mu,V)$ converges pointwise, and in distribution, to the Gaussian density $N(\mu,V)$.
\end{corollary}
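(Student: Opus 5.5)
Since $p_\gamma(x;\mu,V)$ and the Gaussian both depend on $x$ only through $Q=Q_{\mu,V}(x)$ together with the factor $|V|^{-1/2}$, the plan is to prove pointwise convergence of the radial profile and of the normalizing constant separately. Convergence in distribution then follows from Scheffé's lemma: pointwise convergence of probability densities to a probability density implies $L^1$ convergence, and hence convergence in total variation and in distribution. So the task reduces to showing that, for each fixed $Q\ge 0$, the profile tends to $e^{-Q/2}$ and the constant tends to $(2\pi)^{-d/2}$.

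\emph{Positive branch} ($\gamma\to0+$). From \cref{eq:a-positive}, $a_{\gamma,d}=\gamma/(2+\gamma(d+2))$, so $a_{\gamma,d}/\gamma\to 1/2$. For fixed $Q$, the support radius $R_\gamma^2=d+2+2/\gamma$ tends to $\infty$, so eventually $Q<R_\gamma^2$ and the positive part may be dropped. We then write
\[
\tfrac{1}{\gamma}\log(1-a_{\gamma,d}Q)=-\tfrac{a_{\gamma,d}}{\gamma}Q+O(\gamma Q^2)\to -Q/2 .
\]
For the constant in \cref{eq:A-positive}, with $z=1/\gamma\to\infty$, the standard asymptotic $\Gamma(z+1+d/2)/\Gamma(z+1)\sim z^{d/2}$ gives
\[
A_{\gamma,d}\sim \pi^{-d/2}(a_{\gamma,d}/\gamma)^{d/2}\to(2\pi)^{-d/2}.
\]

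\emph{Negative branch} ($\gamma\to0-$). By \cref{eq:nu-gamma}, $\nu=-2/\gamma-d\to\infty$. The profile satisfies
\[
(1+Q/(\nu-2))^{-(\nu+d)/2}\to e^{-Q/2},
\]
because $\frac{\nu+d}{2}\log(1+Q/(\nu-2))\to Q/2$. The constant satisfies
\[
\Gamma((\nu+d)/2)/\Gamma(\nu/2)\sim(\nu/2)^{d/2},
\]
so it behaves like $(\nu/2)^{d/2}\{\pi(\nu-2)\}^{-d/2}\to(2\pi)^{-d/2}$.

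The only step needing care is the gamma-ratio asymptotic $\Gamma(z+b)/\Gamma(z)\sim z^{b}$, which I would cite (Stirling or Wendel's inequality) rather than reprove. As an alternative that avoids it, one can get the constant indirectly: the profiles converge pointwise and are dominated by an integrable envelope, so dominated convergence applies. For $\gamma>0$ small, $(1-a_{\gamma,d}Q)_+^{1/\gamma}\le e^{-a_{\gamma,d}Q/\gamma}\le e^{-Q/4}$ (using $1-u\le e^{-u}$ and $a_{\gamma,d}/\gamma\ge1/4$ eventually). For $\nu$ large, $(1+Q/(\nu-2))^{-(\nu+d)/2}\le(1+Q/(\nu-2))^{-(\nu-2)/2}$, which is decreasing in $\nu$ and hence bounded by a fixed integrable Student profile. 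Dominated convergence then gives convergence of the total mass, and therefore of the normalizing constants, to the Gaussian one.
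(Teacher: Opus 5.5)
Your proof is correct. The paper states this corollary without any proof, treating it as a direct consequence of the explicit formulas in \cref{thm:mean-cov}, and your argument supplies exactly the routine verification it leaves implicit: pointwise limits of the radial profile, limits of the normalizing constants via $\Gamma(z+b)/\Gamma(z)\sim z^{b}$, then Scheff\'e's lemma, which in fact gives total-variation convergence. One detail is worth stating in your dominated-convergence alternative: the Student envelope $(1+Q/n_0)^{-n_0/2}$ is integrable on $\R^d$ only when $n_0>d$, and you may assume this since $\nu\to\infty$.
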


\section{Acceptance-region calibration}\label{sec:calibration}

For positive $\gamma$, \cref{eq:Rgamma} gives the squared Mahalanobis radius of the support. Reversing this relation turns a prescribed acceptance region into a deformation parameter.

\begin{theorem}[Universal acceptance-region calibration]\label{thm:calibration}
Let $d\geq1$, let $R^2>d+2$, and define
\begin{equation}
 \gamma_R=\frac{2}{R^2-d-2},
 \qquad
 \alpha_R=\frac{1}{\gamma_R}=\frac{R^2-d-2}{2}.
\label{eq:gammaR}
\end{equation}
For a prescribed mean $\mu$ and positive-definite covariance $V$, let
\[
 A_{\mu,V}(R)
 =\left\{x\in\R^d:Q_{\mu,V}(x)\leq R^2\right\}.
\]
Then the unique maximizer of every $\gamma_R$-admissible projective entropy over all densities with mean $\mu$ and covariance $V$ is
\begin{equation}
 q_R(x;\mu,V)
 =
 \frac{C_{d,R}}{|V|^{1/2}}
 \left(1-\frac{Q_{\mu,V}(x)}{R^2}\right)_+^{\alpha_R},
\label{eq:qR}
\end{equation}
where
\begin{equation}
 C_{d,R}
 =
 \frac{\Gamma(R^2/2)}
 {(\pi R^2)^{d/2}\Gamma((R^2-d)/2)}.
\label{eq:CdR}
\end{equation}
Moreover,
\[
 \overline{\supp(q_R)}=A_{\mu,V}(R).
\]
Thus the acceptance ellipsoid is generated endogenously by the calibrated maximum-entropy problem rather than imposed as an additional support constraint.
\end{theorem}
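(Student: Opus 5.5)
The plan is to reduce the statement to \cref{thm:mean-cov}(i) by substituting $\gamma=\gamma_R$ and checking that the resulting formulas coincide with \cref{eq:qR,eq:CdR}. Since $R^2>d+2$, the value $\gamma_R=2/(R^2-d-2)$ is finite and positive, so $\gamma_R\in(0,\infty)$ and part (i) applies. That part gives, for every $\gamma_R$-admissible projective entropy, the unique maximizer over $\calP_{\mu,V}$. What remains is algebraic identification of the constants and of the support.

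\textbf{Step 1 (shape).} First I will compute $a_{\gamma_R,d}$ from \cref{eq:a-positive}. The denominator is
\[
2+\gamma_R(d+2)=\gamma_R\left(\tfrac{2}{\gamma_R}+d+2\right)=\gamma_R\{(R^2-d-2)+d+2\}=\gamma_R R^2 ,
\]
so $a_{\gamma_R,d}=1/R^2$. The exponent is $1/\gamma_R=\alpha_R$. Hence the bracket $(1-a Q)_+^{1/\gamma}$ in \cref{eq:qgaussian-positive} becomes $(1-Q_{\mu,V}/R^2)_+^{\alpha_R}$.

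\textbf{Step 2 (constant).} Next I will substitute into \cref{eq:A-positive}. We have $a^{d/2}=R^{-d}$, $1+\gamma_R^{-1}=(R^2-d)/2$, and $1+\gamma_R^{-1}+d/2=R^2/2$. Therefore
\[
A_{\gamma_R,d}=\frac{\Gamma(R^2/2)}{(\pi R^2)^{d/2}\Gamma((R^2-d)/2)}=C_{d,R}.
\]

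\textbf{Step 3 (support).} By \cref{eq:Rgamma}, $R_{\gamma_R}^2=d+2+2/\gamma_R=d+2+(R^2-d-2)=R^2$, so the closure of the support is exactly $A_{\mu,V}(R)$. As an independent check, I can verify this directly from \cref{eq:qR}: the density is positive if and only if $Q_{\mu,V}<R^2$. The closure of the open ellipsoid is the closed ellipsoid because $Q_{\mu,V}$ is a positive-definite quadratic form. Uniqueness and the universality over all admissible $F$ are inherited from \cref{thm:mean-cov} together with \cref{thm:universality}.

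The main obstacle is not the algebra but making sure the feasible class matches. \Cref{thm:mean-cov} is stated over $\calP_{\mu,V}$, which requires the power integral to be finite, whereas the calibration statement says ``all densities with mean $\mu$ and covariance $V$''. For $\gamma_R>0$ the integral $\int p^{1+\gamma}$ may be $+\infty$ for some feasible $p$. Such densities cannot be maximizers, since $F_\gamma$ is decreasing and the optimizer has a finite value. I will make this explicit, either by interpreting the problem as a minimization of $\int p^{1+\gamma}$ with the value $+\infty$ allowed, or by noting that these densities are excluded by $\calM_{+,\gamma}$. Both readings give the same unique optimizer.
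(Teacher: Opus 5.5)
Your proposal is correct and follows essentially the same route as the paper. You substitute $\gamma_R$ into \cref{thm:mean-cov}(i), check $a_{\gamma_R,d}=R^{-2}$, $1/\gamma_R=\alpha_R$ and $A_{\gamma_R,d}=C_{d,R}$ (the paper defers the last identity to \cref{app:calibration-constant}, which is exactly your Step 2), read off the support from $R_{\gamma_R}^2=R^2$, and inherit uniqueness and universality from \cref{thm:mean-cov} and \cref{thm:universality}; your extra remark about densities with $\int p^{1+\gamma_R}=\infty$ is a sensible clarification that the paper leaves implicit in the definitions of $\calP_{\mu,V}$ and $\calM_{+,\gamma}$, and either of your two readings works because the convexity bound in \cref{app:positive-optimality} holds trivially when $\int q^{1+\gamma}=\infty$.
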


\begin{proof}
Equation \cref{eq:gammaR} is equivalent to
\[
 R^2=d+2+\frac{2}{\gamma_R}.
\]
Substitution into \cref{eq:a-positive} gives $a_{\gamma_R,d}=R^{-2}$, while $1/\gamma_R=\alpha_R$. Equation \cref{eq:qR} therefore follows from \cref{thm:mean-cov}(i). The transformation of the normalizing constant from \cref{eq:A-positive} to \cref{eq:CdR} is verified in \cref{app:calibration-constant}. Since \cref{thm:mean-cov} imposes only mean and covariance constraints, no external support restriction is used. Universality follows from \cref{thm:universality}.
\end{proof}

\begin{corollary}[Affine equivariance]\label{cor:affine-equivariance}
Under an invertible affine transformation $y=b+Bx$, the push-forward of $q_R(x;\mu,V)$ is $q_R(y;b+B\mu,BVB^\top)$. Hence the calibrated reference family is affine equivariant.
\end{corollary}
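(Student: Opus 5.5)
The plan is a direct change of variables, using the explicit form \cref{eq:qR}. Set $y=b+Bx$ with $B$ invertible and let $X$ have density $q_R(\cdot;\mu,V)$. By the Jacobian formula, $Y$ has density
\[
 f_Y(y)=|\det B|^{-1}\,q_R\!\left(B^{-1}(y-b);\mu,V\right).
\]
The first step is to rewrite the Mahalanobis form. Put $\mu'=b+B\mu$ and $V'=BVB^\top$, which is again positive definite. Then
\[
 Q_{\mu,V}\!\left(B^{-1}(y-b)\right)=(y-\mu')^\top B^{-\top}V^{-1}B^{-1}(y-\mu')=Q_{\mu',V'}(y).
\]
Hence the bracket $\left(1-Q/R^2\right)_+^{\alpha_R}$ is carried to the same expression in $(\mu',V')$, with $R$ unchanged.

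The second step handles the prefactor. Since $|V'|^{1/2}=|\det B|\,|V|^{1/2}$, we get
\[
 |\det B|^{-1}\,C_{d,R}\,|V|^{-1/2}=C_{d,R}\,|V'|^{-1/2}.
\]
The constant $C_{d,R}$ in \cref{eq:CdR} depends only on $d$ and $R$, so it is untouched. Combining the two steps gives $f_Y=q_R(\cdot;b+B\mu,BVB^\top)$ pointwise.

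As a consistency check, and as an alternative route, one can argue through uniqueness. The affine image has mean $b+B\mu$ and covariance $BVB^\top$. The power integral scales as
\[
 \int f_Y^{1+\gamma}=|\det B|^{-\gamma}\int q_R^{1+\gamma},
\]
which is a fixed positive factor. The map $p\mapsto$ push-forward is a bijection between the feasible classes $\calP_{\mu,V}$ and $\calP_{b+B\mu,BVB^\top}$. So it preserves the ordering of every $\gamma_R$-admissible entropy, and the uniqueness in \cref{thm:calibration} forces the image of the optimizer to be the optimizer.

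There is no real obstacle. The only point needing care is keeping the orientation and sign of the Jacobian straight, via $|\det B|$, and observing that the calibration fixes $R$ and therefore $\gamma_R$ independently of $(\mu,V)$. That independence is what makes the family closed under affine maps, with the acceptance ellipsoid mapped onto $A_{b+B\mu,BVB^\top}(R)$.
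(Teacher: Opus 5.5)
Your proof is correct and matches what the paper has in mind. The paper gives no separate proof and treats the corollary as immediate from the explicit form \cref{eq:qR}; your Jacobian computation, using $Q_{\mu,V}(B^{-1}(y-b))=Q_{b+B\mu,BVB^\top}(y)$ and $|BVB^\top|^{1/2}=|\det B|\,|V|^{1/2}$, is exactly that verification. Your uniqueness-based alternative is also valid, and it is the same change-of-variables reduction the paper uses implicitly in \cref{app:mean-cov-proof} to pass to $\mu=0$, $V=I_d$.
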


\begin{remark}[Statistical construction of the reference distribution]\label{rem:robust-inputs}
The theorem separates the construction into interpretable inputs. The center and shape matrix may be supplied by classical or robust location and scatter estimators. The radius $R$ may come from an externally specified admissible region, a safety requirement, or a screening rule. The theorem then determines both the deformation parameter and the full reference density. Robustness is not automatic: if nonrobust estimates of $\mu$ and $V$ are contaminated, the calibrated reference distribution inherits that contamination.
\end{remark}

\begin{remark}[Limiting cases]\label{rem:limits}
As $R^2\downarrow d+2$, one has $\gamma_R\to\infty$ and $\alpha_R\downarrow0$, and $q_R$ approaches the uniform density on the ellipsoid of squared radius $d+2$. As $R^2\to\infty$, one has $\gamma_R\to0$, and the Gaussian maximum-entropy density is recovered. The radius therefore interpolates continuously between a hard bounded reference and the Gaussian reference.
\end{remark}

\begin{remark}[Matching a Gaussian probability ellipsoid]\label{rem:chi-square}
One may set $R^2$ equal to a $\chi_d^2$ quantile when the desired boundary is inherited from a Gaussian probability ellipsoid. A finite positive $\gamma_R$ exists only if that quantile exceeds $d+2$. This restriction is not technical: an ellipsoid narrower than this threshold cannot simultaneously be the support of the family in \cref{eq:qR} and preserve covariance $V$.
\end{remark}

\section{Projective shape and radial scale}\label{sec:shape-scale}

Write an unnormalized nonnegative function as
\[
 r=mp,
 \qquad
 m=\int r\,\dd\mu_0,
 \qquad
 \int p\,\dd\mu_0=1.
\]
The projective power functional depends only on the shape:
\[
 \calJ_\gamma([r])=\int p^{1+\gamma}\,\dd\mu_0.
\]
Whether a statistical score identifies the radial scale $m$ is a separate issue. Substituting $g=mp$ into the H\"older composite score \cref{eq:holder-score} gives
\begin{equation}
 S_{\gamma,\phi}(f,mp)
 =
 m^{1+\gamma}\left(\int p^{1+\gamma}\,\dd\mu_0\right)
 \phi\!\left(
 \frac{\int fp^\gamma\,\dd\mu_0}
 {m\int p^{1+\gamma}\,\dd\mu_0}
 \right).
\label{eq:radial-holder}
\end{equation}

The following elementary consequence of the existing Hölder-score representation clarifies which member of the class discards the radial scale.
\begin{lemma}[Radial invariance of H\"older scores]
\label{lem:radial}
As a direct consequence of the H\"older-score representation~\citep{KanamoriFujisawa2014Bernoulli}, a H\"older composite score is
invariant under rescaling of its second argument,
\[
S_{\gamma,\phi}(f,mg)=S_{\gamma,\phi}(f,g),
\qquad m>0,
\]
if and only if $\phi(z)=-z^{1+\gamma}$, that is, if and only if it
belongs to the pseudo-spherical equivalence class. The scale
invariance of the pseudo-spherical score in unnormalized models was
used explicitly by~\citet{KanamoriFujisawa2015}.
\end{lemma}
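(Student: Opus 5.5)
The plan is to work directly from the radial representation \cref{eq:radial-holder}. Write $g$ in place of $p$ there; normalization plays no role. Set $B=\int g^{1+\gamma}\,\dd\mu_0>0$ and $z=\int f g^\gamma\,\dd\mu_0/B$. Then
\[
 S_{\gamma,\phi}(f,mg)=m^{1+\gamma}B\,\phi(z/m),
 \qquad
 S_{\gamma,\phi}(f,g)=B\,\phi(z).
\]
Radial invariance for all $m>0$ is therefore equivalent to the functional equation $m^{1+\gamma}\phi(z/m)=\phi(z)$, which must hold for every $m>0$ and every attainable value of $z$.

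\emph{Sufficiency.} If $\phi(z)=-z^{1+\gamma}$, then $m^{1+\gamma}\phi(z/m)=-z^{1+\gamma}=\phi(z)$, and the score is invariant. Substituting back, $S=-(\int fg^\gamma)^{1+\gamma}/(\int g^{1+\gamma})^{\gamma}$. Up to the monotone transformation that defines the equivalence class, this is the pseudo-spherical score, consistent with \cref{eq:ps-div}.

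\emph{Necessity.} Fix $f$ and $g$ with $z_0=\int fg^\gamma/\int g^{1+\gamma}>0$. Such a pair exists, for example $f=g$, which gives $z_0=1$. Take $z=z_0$ in the functional equation and substitute $m=z_0/t$ for arbitrary $t>0$. This gives
\[
 \phi(t)=(t/z_0)^{1+\gamma}\phi(z_0)
 \quad\text{for all }t>0 .
\]
Choosing the base pair $f=g$ gives $z_0=1$, and the normalization $\phi(1)=-1$ then yields $\phi(t)=-t^{1+\gamma}$ for every $t>0$. The value $t=0$, which arises when $f$ and $g^\gamma$ have disjoint supports, follows by continuity or by taking $m\to\infty$. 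Since the argument of $\phi$ in \cref{eq:holder-score} is always nonnegative, $\phi$ is only ever evaluated on $[0,\infty)$, so this determines $\phi$ on the whole relevant domain.

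\emph{Main obstacle.} The difficulty is the domain bookkeeping: invariance has to be required for all admissible pairs $(f,g)$, and we need at least one pair with a positive ratio $z$ so that the scaling orbit $\{z/m:m>0\}$ covers all of $(0,\infty)$. The diagonal pair $f=g$ supplies this. The properness constraint $\phi(z)\ge -z^{1+\gamma}$ from \cref{eq:holder-score} is not needed beyond a consistency check: the pseudo-spherical choice attains it with equality. The closing citation to \citet{KanamoriFujisawa2015} is bibliographic and needs no proof.
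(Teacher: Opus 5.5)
Your proposal is correct and follows essentially the same route as the paper: derive $m^{1+\gamma}\phi(z/m)=\phi(z)$ from the radial representation, rescale to get $\phi(t)\propto t^{1+\gamma}$, fix the constant with $\phi(1)=-1$, and verify the converse by direct substitution. Your use of the diagonal pair $f=g$ to pin $z_0=1$ and your treatment of $z=0$ are small refinements that the paper leaves implicit; the case $z=0$ in fact follows from the functional equation alone, since $m^{1+\gamma}\neq1$ for $m\neq1$.
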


\begin{proof}
Let $t=(\int fp^\gamma\,\dd\mu_0)/(\int p^{1+\gamma}\,\dd\mu_0)>0$. Radial invariance in \cref{eq:radial-holder} implies
\[
 m^{1+\gamma}\phi(t/m)=\phi(t).
\]
Putting $z=t/m$ shows that $\phi(z)/z^{1+\gamma}$ is constant. Hence $\phi(z)=Cz^{1+\gamma}$, and the normalization $\phi(1)=-1$ gives $C=-1$. Direct substitution proves the converse.
\end{proof}

This lemma separates two methodological questions. Projective maximum entropy selects a normalized shape, and that shape is universal across the admissible entropy class. By contrast, scale identification depends on the chosen score. Pseudo-spherical and logarithmic gamma scores eliminate the radial scale, whereas density-power scores retain it. This distinction is exploited by unnormalized-model procedures that estimate a contamination proportion jointly with shape parameters~\citep{KanamoriFujisawa2015} and by localized homogeneous divergences for discrete unnormalized models~\citep{TakenouchiKanamori2017}. Homogeneous scoring rules have also been used to remove arbitrary scaling constants in model comparison~\citep{DawidMusio2015}. The projective principle therefore does not replace scale-estimation methodology; it specifies the shape component to which such methods may be attached.

The same limitation applies to energy-based models. Although $\calJ_\gamma([r_\theta])$ is invariant to additive constants in the energy, it still contains the ratio
\[
 \calJ_\gamma([r_\theta])
 =
 \frac{\int r_\theta^{1+\gamma}\,\dd\mu_0}
 {\left(\int r_\theta\,\dd\mu_0\right)^{1+\gamma}}.
\]
Thus the variational principle does not by itself yield a partition-function-free learning algorithm. Such algorithms require additional estimating structure, as in score matching \citep{Hyvarinen2005}, statistically efficient estimation frameworks for unnormalized models \citep{UeharaKanamoriTakenouchiMatsuda2020}, pseudo-spherical contrastive objectives \citep{YuSongSongErmon2021}, and recent unifications of unnormalized-density learning through noise-contrastive estimation \citep{RyuShahWornell2025}. The projective maximum-entropy result is complementary to these methods: it characterizes the target shape selected by a variational principle, but does not prescribe a computational estimator for an arbitrary energy-based model.

\section{Numerical illustrations}\label{sec:illustrations}

The figures are analytical illustrations rather than empirical validation. They show how the acceptance radius controls the reference density and how the calibrated family connects bounded and Gaussian limits.

Figure~\ref{fig:density-family} displays the one-dimensional family for several values of $R^2$. As $R^2$ approaches the minimum value $3$, the density approaches a uniform law on a bounded interval. As $R^2$ increases, the density approaches the standard Gaussian.

\begin{figure}[tbp]
\centering
\includegraphics[width=0.84\linewidth]{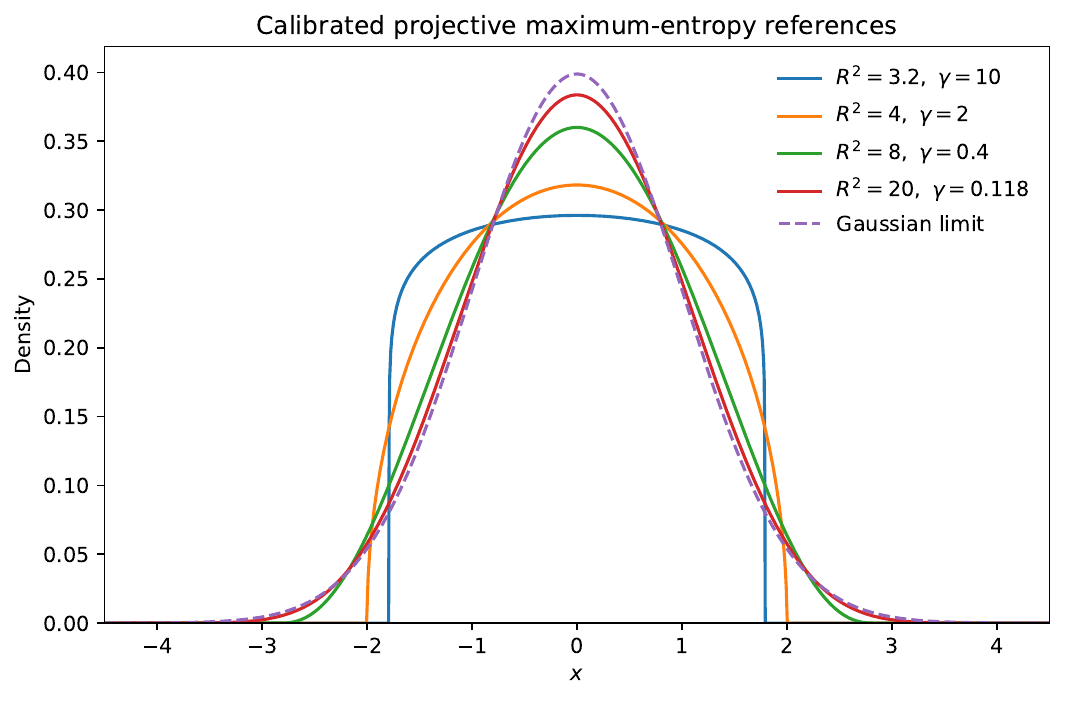}
\caption{Calibrated projective maximum-entropy references in one dimension with $\mu=0$ and $V=1$. Each deformation parameter is determined by $\gamma_R=2/(R^2-3)$.}
\label{fig:density-family}
\end{figure}

Figure~\ref{fig:calibration-curve} shows the relation between $R^2$ and $\gamma_R$ in several dimensions. The parameter increases rapidly near the minimum feasible radius $d+2$ and approaches the Gaussian limit $\gamma_R=0$ for large radii.

\begin{figure}[tbp]
\centering
\includegraphics[width=0.80\linewidth]{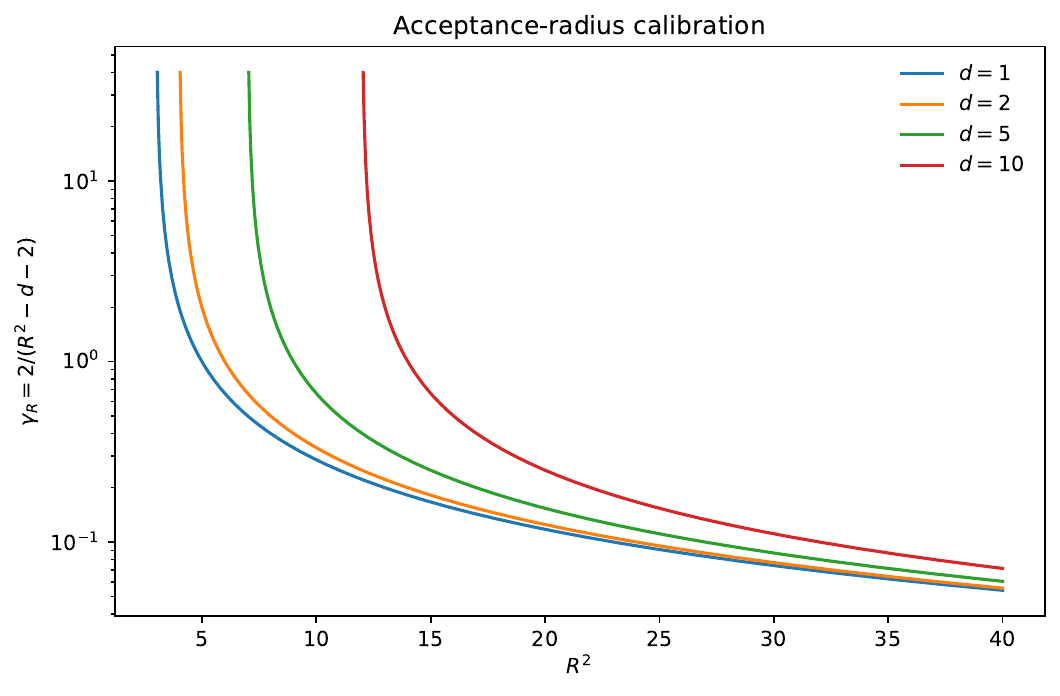}
\caption{Acceptance-radius calibration curves. The relation is $\gamma_R=2/(R^2-d-2)$ for $R^2>d+2$.}
\label{fig:calibration-curve}
\end{figure}

For $Q<R^2$, the relative negative log-density of the bounded reference is
\begin{equation}
 \ell_R(Q)
 =-\alpha_R\log\left(1-\frac{Q}{R^2}\right).
\label{eq:boundary-profile}
\end{equation}
Unlike the Gaussian profile $Q/2$, this quantity diverges at the prescribed boundary. 
Figure~\ref{fig:boundary-profile} illustrates how the acceptance boundary is encoded directly in the reference density.

\begin{figure}[tbp]
\centering
\includegraphics[width=0.80\linewidth]{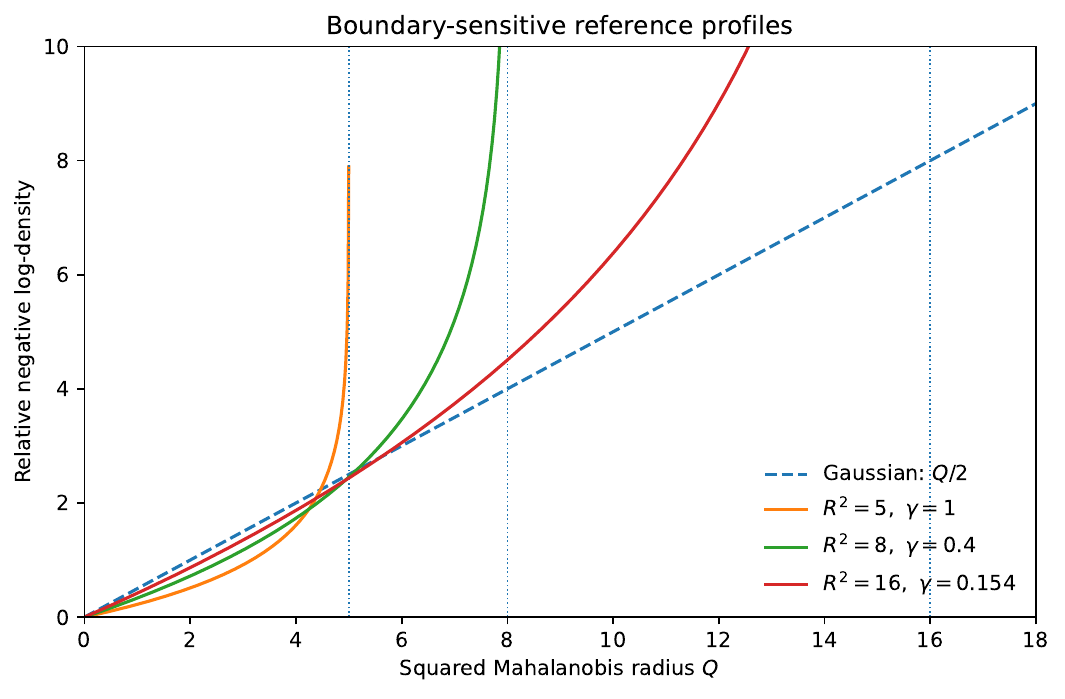}
\caption{Relative negative log-density as a function of squared Mahalanobis radius $Q$. Vertical dotted lines indicate the calibrated support boundaries.}
\label{fig:boundary-profile}
\end{figure}

\section{Discussion}\label{sec:discussion}

The distributions in \cref{thm:mean-cov} are established members of the power-entropy maximum-entropy family. The contribution of this paper is therefore not a new $q$-Gaussian or Student family. It is a statistical characterization with three linked components: projective formulation for unnormalized shapes, a universality theorem across a broad entropy class, and inverse calibration from an acceptance region to a unique affine-equivariant reference density.

The universality result also clarifies what is and is not shared by different scoring rules. Their diagonal entropy order may be identical even when their off-diagonal divergences, influence functions, regression properties, and scale-identification behavior differ. Maximum entropy sees the diagonal ordering; statistical estimation may depend critically on the remaining structure. This is why the projective shape principle and the radial-scale proposition should be viewed as complementary rather than interchangeable results.

The calibrated radius gives a direct interpretation of positive deformation. It controls the support geometry through
\[
 R_\gamma^2=d+2+\frac{2}{\gamma}.
\]
For negative deformation in the finite-covariance range, the analogous interpretation is tail thickness through
\[
 \nu=-\frac{2}{\gamma}-d.
\]
Thus the sign of $\gamma$ distinguishes bounded-support and heavy-tailed reference design. The appropriate branch depends on the statistical purpose. A bounded reference is unsuitable when extreme-tail risk is itself the target of inference, while a heavy-tailed reference may be undesirable when the domain has a genuine physical boundary.

Several extensions remain open. First, moment constraints could be replaced by robust estimating equations or escort moments; universality would then need to be re-examined because the feasible set itself may depend on the deformation. Second, the affine-invariance characterization could be extended beyond two-integral composite scores to derivative-based local scores. Third, for energy-based models, one needs conditions under which the projective power functional or its gradient can be estimated without evaluating the normalizing constant.

\section{Conclusion}\label{sec:conclusion}

We formulated maximum entropy on the projective space of nonnegative measures and identified the normalized power integral as the common variational object behind a broad class of entropy and scoring-rule constructions. Any admissible monotone transform of this functional yields the same optimizer under fixed moment constraints. Under general linear constraints, the optimizer is $q$-exponential. Under mean and covariance constraints, it is a compactly supported $q$-Gaussian for positive deformation and a Student-type density for negative deformation.

The acceptance-region calibration theorem provides the main statistical design result. A Mahalanobis ellipsoid with squared radius $R^2>d+2$ uniquely determines $\gamma_R=2/(R^2-d-2)$ and hence a unique affine-equivariant maximum-entropy reference density. Its support is exactly the prescribed ellipsoid, although no support constraint is imposed in the optimization problem. The result provides a principled route from estimated location and scatter and an externally meaningful admissible region to a fully specified bounded-support reference distribution.

\section*{Acknowledgments}
I would like to express my sincere gratitude to Professors Akifumi Okuno and Hidetoshi Shimodaira for providing the inspiration for this research through our discussions.
Part of this work is supported by JSPS KAKENHI No.~JP26K02989, JP25H01494, and JP26K23861.

\newpage
\appendix

\section{Proof details for the mean--covariance theorem}\label{app:mean-cov-proof}

By the affine change of variables $z=V^{-1/2}(x-\mu)$, it is sufficient to prove \cref{thm:mean-cov} for $\mu=0$ and $V=I_d$.

\subsection{Positive deformation: normalization and covariance}\label{app:positive-integrals}

Let $m=1/\gamma$ and consider
\[
 g(z)=A(1-a\norm{z}^2)_+^m.
\]
Using polar coordinates and the beta integral,
\begin{align}
 \int_{\R^d}(1-a\norm{z}^2)_+^m\,\dd z
 &=
 \frac{2\pi^{d/2}}{\Gamma(d/2)}
 \int_0^{1/\sqrt a}r^{d-1}(1-ar^2)^m\,\dd r \\
 &=
 \pi^{d/2}a^{-d/2}
 \frac{\Gamma(m+1)}{\Gamma(m+1+d/2)}.
\label{eq:positive-beta-normalization}
\end{align}
Hence
\[
 A=a^{d/2}
 \frac{\Gamma(m+1+d/2)}{\pi^{d/2}\Gamma(m+1)}.
\]
Spherical symmetry gives $\E_g[Z]=0$. A second beta-integral calculation yields
\begin{equation}
 \E_g[\norm{Z}^2]
 =
 \frac{d}{2a(m+1+d/2)}.
\label{eq:positive-second-moment}
\end{equation}
Setting this equal to $d$ gives
\[
 a=\frac{1}{2(m+1+d/2)}
 =\frac{\gamma}{2+\gamma(d+2)},
\]
which proves \cref{eq:a-positive}--\cref{eq:A-positive} and $\Cov_g(Z)=I_d$.

\subsection{Positive deformation: global optimality}\label{app:positive-optimality}

Let $q$ be any density with mean zero and covariance $I_d$. Since $t\mapsto t^{1+\gamma}$ is strictly convex,
\begin{equation}
 \int q^{1+\gamma}\,\dd z
 \geq
 \int g^{1+\gamma}\,\dd z
 +(1+\gamma)\int g^\gamma(q-g)\,\dd z.
\label{eq:positive-convexity}
\end{equation}
Writing $c=A^\gamma$, one has
\[
 g^\gamma=c(1-a\norm{z}^2)_+.
\]
Because $(1-u)_+\geq1-u$ and $q\geq0$,
\[
 \int g^\gamma q\,\dd z
 \geq
 c\int(1-a\norm{z}^2)q\,\dd z
 =c(1-ad).
\]
The density $g$ is supported where the positive part is active, so
\[
 \int g^\gamma g\,\dd z=c(1-ad).
\]
The linear term in \cref{eq:positive-convexity} is therefore nonnegative, and $g$ globally minimizes the power integral. Strict convexity gives uniqueness. By \cref{thm:universality}, $g$ maximizes every admissible entropy.

\subsection{Negative deformation: normalization, covariance, and optimality}\label{app:negative-proof}

Let $-2/(d+2)<\gamma<0$ and define
\[
 \nu=-\frac{2}{\gamma}-d>2,
 \qquad
 b=\frac{1}{\nu-2}
 =-\frac{\gamma}{2+\gamma(d+2)}>0.
\]
The candidate density is
\[
 g(z)=B(1+b\norm{z}^2)^{1/\gamma}.
\]
Since $1/\gamma=-(\nu+d)/2$, the standard multivariate beta integral gives
\[
 B=
 \frac{\Gamma((\nu+d)/2)}
 {\Gamma(\nu/2)\{\pi(\nu-2)\}^{d/2}}.
\]
This is the multivariate Student density with degrees of freedom $\nu$ and scale matrix $(\nu-2)I_d/\nu$, so $\E_g[Z]=0$ and $\Cov_g(Z)=I_d$.

For any competing density $q$ with the same first two moments, strict concavity of $t\mapsto t^{1+\gamma}$ yields
\begin{equation}
 \int q^{1+\gamma}\,\dd z
 \leq
 \int g^{1+\gamma}\,\dd z
 +(1+\gamma)\int g^\gamma(q-g)\,\dd z.
\label{eq:negative-concavity}
\end{equation}
Here
\[
 g^\gamma=B^\gamma(1+b\norm{z}^2),
\]
which is an affine combination of $1$ and $\norm{z}^2$. Equal mass and equal second moments therefore make the linear term in \cref{eq:negative-concavity} exactly zero. Hence $g$ globally maximizes the power integral, and strict concavity gives uniqueness. This proves the negative branch of \cref{thm:mean-cov}.

\subsection{Calibration constant}\label{app:calibration-constant}

When $R^2=d+2+2/\gamma$,
\[
 \frac{1}{\gamma}=\frac{R^2-d-2}{2}=\alpha_R,
 \qquad
 a_{\gamma,d}=\frac{1}{R^2}.
\]
Substituting these identities into \cref{eq:A-positive} gives
\begin{align*}
 A_{\gamma,d}
 &=
 R^{-d}
 \frac{\Gamma(1+\alpha_R+d/2)}
 {\pi^{d/2}\Gamma(1+\alpha_R)}\\
 &=
 \frac{\Gamma(R^2/2)}
 {(\pi R^2)^{d/2}\Gamma((R^2-d)/2)}
 =C_{d,R},
\end{align*}
which proves \cref{eq:CdR}.


\end{document}